
\documentclass[fleqn,10pt]{SelfArx} 

\usepackage[english]{babel} 

\usepackage{lipsum} 


\setlength{\columnsep}{0.55cm} 
\setlength{\fboxrule}{0.75pt} 


\definecolor{color1}{RGB}{0,0,90} 
\definecolor{color2}{RGB}{0,20,20} 


\usepackage{hyperref} 

\hypersetup{
	hidelinks,
	colorlinks,
	breaklinks=true,
	urlcolor=color2,
	citecolor=color1,
	linkcolor=color1,
	bookmarksopen=false,
	pdftitle={Title},
	pdfauthor={Author},
}

\usepackage{amssymb,amsthm,amsmath,}
\newtheorem{thm}{Theorem}[section]

\newtheorem{lem}[thm]{Lemma}

\newtheorem{prop}[thm]{Proposition}


\JournalInfo{Journal, Vol. XXI, No. 1, 1-5, 2013} 
\Archive{Additional note} 

\PaperTitle{Weighted Composition Operator on Fock Space} 

\Authors{Rui Hu\textsuperscript{1}*} 
\affiliation{\textsuperscript{1}\textit{Department of Mathematics and Statistics, University of Chongqing, Chongqing, People's Republic of China}} 
\affiliation{*\textbf{Corresponding author}: 202106021013@stu.cqu.edu.cn} 

\Keywords{Fock space --- Weighted composition operator --- Normality} 


\Abstract{ In this thesis, we establish a necessary and sufficient condition for a weighted composition operator to commute  with a self-adjoint weighted composition operator on the Fock space, then obtain a sufficient condition for these commuting weighted composition operators to be  normal.  }

\begin{document}

\maketitle 

\tableofcontents 

\thispagestyle{empty} 


\section{Introduction} 


The operator theory in function space is closely related to many fields in mathematics, and the study of it is helpful to promote the development of other disciplines. The weighted composition operator is a combination of composition operator and multiplication operator. The relationship between the properties of operators and the properties of their defined functions is reflected. On the one hand, the conclusion of holomorphic function theory can be applied to the properties of linear operators. On the other hand, operator theory can also be used as a tool to study holomorphic function space, so the relationship between operator theory and function theory can be established, which is one of the important contents of operator theory.

Let $\mathbb D$ denote the open unit disk in the complex plane $\mathbb C$. For any positive parameter $\alpha$, we consider the Ganssian measure
$$d\lambda_{\alpha}(z)=\frac{\alpha}{\pi}e^{-\alpha|z|^2}dA(z),$$
where $dA$ is the Euclidean area measure on the complex plane and $d\lambda_{\alpha}$ is a probability measure. The Fock space $F_{\alpha}^2$ consists of all entire functions $f$ in $L^2(\mathbb C,d\lambda_{\alpha})$ and $F_{\alpha}^2$ is a Hilbert space with the following inner product inherited from $L^2(\mathbb C,d\lambda_{\alpha})$:
$$ \langle  f,g \rangle =\int_{\mathbb C}f(z)\overline{g(z)}d\lambda_{\alpha}(z).$$
More on the Fock space can be found at \cite{zhuf}.

In addition, for example, Zhao Liankuo studied unitary weighted composition operators on $\mathbb C^n $. We characterize the normal weighted composition operators on the Fock space $\mathbb C^N $, the self-adjoint weighted composition operators on the Fock space $\mathbb C^N $ are discussed. We also study the isometrically weighted composition operators on the Fock space $\mathbb C^N $. It is proved that the isometrically weighted composition operator on the Fock space $\mathbb C^N $ is unitary operator, see \cite{zf,zf2,zf1}

In addition to the classical analytic function space, in recent years, some scholars have begun to pay attention to the weighted composition operator on the function space with the tree as the domain. And try to generalize the related conclusion on the traditional function space to this kind of new function space, see \cite{ap}.

It is worth noting that Ko studied weighted composition operators on Hardy spaces $H^2 $ in \cite{koe} in 2021, and the necessary and sufficient conditions for a weighted composition operator to be interchangeable with another self-adjoint weighted composition operator are obtained. Furthermore, it is proved that all the weighted composition operators which are commutative with a class of self-adjoint weighted composition operators are normal.
Inspired by these works, we extend some of the results in \cite{koe} to Fock spaces. We establish a necessary and sufficient condition that a weighted composition operator and another self-adjoint weighted composition operator on~Bergman~space are commutative. The sufficient conditions for the normality of this class of weighted composition operators on Fock spaces are given.


\section{Preliminaries}

In this section, we recall some definitions needed for our program. In this paper, we consider the Fock space $F^2$.

Let $\mathcal{B}(\mathcal{H})$ be the algebra of all bounded linear operators on a separable complex Hilbert
space $\mathcal{H}$. Given a fixed operator $T \in \mathcal{B}(\mathcal{H})$, we say an operator $S$ commutes with $T$ if $TS=ST$. The set of all operators which commute with $T$, denoted $\{T\}'$.

If $\varphi$ is an analytic mapping from $\mathbb C$ into itself, for all $h \in F^2$, the composition operator $C_{\varphi}$ on $F^2$ is defined by 
$$C_{\varphi}h = h\cdot\varphi,$$
The composition operator $C_{\varphi}$ is bounded on $F^2$ if and only if 
$$\varphi(z)=az+b,\ \ z\in \mathbb C,$$
where $a,b\in \mathbb C$ and $|a|\leq1$ (see \cite{car}).
For a non-constant valued function $\psi $ in $\mathbb C $, the operator $W_{\psi, \varphi}$ given by
$$W_{\psi, \varphi}f(z)=\psi(z)f(\varphi(z)), \ \ \  f\in F_{\alpha}^2, \ z\in \mathbb C$$
is called a weighted composition operator.

For each $\beta\in\mathbb C$, the function $K_z(w)=e^{\alpha \overline{z}w},w\in \mathbb C$ called the reproducing kernel for $F^2$ at $\beta$, has the property that
$$f(z)=\langle f, K_z\rangle, \ \ \ \  f\in F_{\alpha}^2.$$

For the product of two weighted composition operators on Fock space, we have the following simple conclusion.
\begin{lem}\label{wf}
If $h\in F^2$, then for weighted composition operator $W_{f,\varphi}$ and $W_{g, \psi}$,we have
$$W_{f, \varphi} W_{g, \psi}=M_{f \cdot(g \circ \varphi)} C_{\psi\circ\varphi}.$$
Here and hereafter, we use $M_h $ to denote the multiplication operator with $h$ as the sign.
\end{lem}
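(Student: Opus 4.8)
The plan is to verify this operator identity by letting both sides act on an arbitrary $h \in F^2$ and comparing the resulting functions pointwise; since two operators on $F^2$ coincide as soon as they produce the same image for every $h$, this reduces the lemma to a direct substitution. First I would apply the innermost operator. By the definition of the weighted composition operator, $(W_{g,\psi}h)(w) = g(w)\,h(\psi(w))$ for every $w \in \mathbb C$, so $W_{g,\psi}h$ is again a function to which the second operator can be applied.

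Next I would feed this function into $W_{f,\varphi}$. Writing $u = W_{g,\psi}h$, the definition gives $(W_{f,\varphi}u)(z) = f(z)\,u(\varphi(z))$. The crucial step is the substitution $w = \varphi(z)$ inside $u$, which yields $u(\varphi(z)) = g(\varphi(z))\,h(\psi(\varphi(z)))$, and hence
$$(W_{f,\varphi}W_{g,\psi}h)(z) = f(z)\,g(\varphi(z))\,h\bigl(\psi(\varphi(z))\bigr).$$

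Finally I would expand the right-hand side and match terms. By definition $(C_{\psi\circ\varphi}h)(z) = h\bigl((\psi\circ\varphi)(z)\bigr) = h\bigl(\psi(\varphi(z))\bigr)$, and multiplication by the symbol $f\cdot(g\circ\varphi)$ contributes the factor $f(z)\,g(\varphi(z))$, so that $(M_{f\cdot(g\circ\varphi)}C_{\psi\circ\varphi}h)(z) = f(z)\,g(\varphi(z))\,h\bigl(\psi(\varphi(z))\bigr)$, which agrees with the expression above. Since $h$ and $z$ are arbitrary, the two operators coincide.

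This argument is essentially bookkeeping, so the only point demanding care — and the place where a sign or ordering error would most easily slip in — is the composition order: one must track that the inner symbol $\psi$ gets precomposed with $\varphi$ (and not the reverse), and read $C_{\psi\circ\varphi}$ under the paper's convention $C_\varphi h = h\circ\varphi$. I would also remark briefly that each operation preserves holomorphy, since $\varphi$ is affine and $f,g$ are entire, so every intermediate quantity remains in the relevant space of entire functions and the identity is a genuine equality of operators on $F^2$.
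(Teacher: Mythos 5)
Your proof is correct and follows essentially the same route as the paper's: both compute $W_{f,\varphi}W_{g,\psi}h$ pointwise by applying the definitions in succession and matching the result $f(z)\,g(\varphi(z))\,h(\psi(\varphi(z)))$ against $M_{f\cdot(g\circ\varphi)}C_{\psi\circ\varphi}h$. The paper merely phrases the intermediate steps via the factorization $W_{f,\varphi}=M_f C_\varphi$, which is a cosmetic difference.
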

\begin{proof}
According to the definition of weighted composition operator, we have
\begin{align*}
W_{f, \varphi} W_{g, \psi}(h) & =M_{f} C_{\varphi} M_{g} C_{\psi}(h)\\
&=M_{f} C_{\varphi}(g \cdot(h \circ \psi)) \\
& =f \cdot(g \circ \varphi) \cdot(h \circ(\psi \circ \varphi)) \\
& =\left(M_{f \cdot(g \circ \varphi)} C_{\psi\circ\varphi}\right)(h),
\end{align*}
\end{proof}

The following describes the results of the adjoint operator of the weighted composition operator on the Fock space acting on the reproducing kernel, which will be used several times in the following.
\begin{lem}\label{W*}
Let $W_{f,\varphi}$ is a weighted composition operator on $F^2$, then $W_{f,\varphi}^*K_{z}=\overline{f(z)}K_{\varphi(z)}$.
\end{lem}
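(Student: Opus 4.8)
The plan is to prove the identity by testing both sides against an arbitrary element of $F^2$ and invoking the reproducing property of the kernel twice. Since $W_{f,\varphi}$ is a bounded operator on $F^2$, its adjoint $W_{f,\varphi}^*$ is well defined, and it suffices to verify that $W_{f,\varphi}^* K_z$ and $\overline{f(z)}K_{\varphi(z)}$ agree as functionals, i.e. that they have the same inner product with every $h \in F^2$.

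First I would fix an arbitrary $h \in F^2$ and move the operator across the inner product using the definition of the adjoint, so that
$$\langle h, W_{f,\varphi}^* K_z\rangle = \langle W_{f,\varphi}h, K_z\rangle.$$
Next, since $K_z$ is the reproducing kernel at $z$, the right-hand side evaluates the function $W_{f,\varphi}h$ at the point $z$; combining this with the definition $W_{f,\varphi}h(w)=f(w)\,h(\varphi(w))$ gives
$$\langle W_{f,\varphi}h, K_z\rangle = (W_{f,\varphi}h)(z) = f(z)\,h(\varphi(z)).$$

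I would then apply the reproducing property a second time to rewrite the evaluation $h(\varphi(z)) = \langle h, K_{\varphi(z)}\rangle$, and pull the scalar $f(z)$ into the second slot of the inner product, where it must be conjugated because the inner product is conjugate-linear in its second argument:
$$f(z)\,h(\varphi(z)) = f(z)\,\langle h, K_{\varphi(z)}\rangle = \langle h, \overline{f(z)}\,K_{\varphi(z)}\rangle.$$
Comparing the two ends of this chain shows $\langle h, W_{f,\varphi}^*K_z\rangle = \langle h, \overline{f(z)}K_{\varphi(z)}\rangle$ for every $h$, and since $h$ is arbitrary the two vectors coincide.

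There is no real obstacle here: the argument is a short two-line application of the reproducing property, and the only point requiring a moment's care is the correct placement of the complex conjugate when transferring the scalar $f(z)$ across the conjugate-linear slot of the inner product. The single implicit assumption is that $W_{f,\varphi}$ is bounded, which guarantees the adjoint exists and that the manipulations above are legitimate.
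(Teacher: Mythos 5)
Your proposal is correct and follows essentially the same route as the paper: test $W_{f,\varphi}^*K_z$ against an arbitrary $h\in F^2$, move the operator across the inner product, and apply the reproducing property twice, with the conjugate appearing from the conjugate-linear slot. The paper's proof is exactly this chain of equalities, written in one line.
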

\begin{proof}
For $h\in F^2$,
$$\langle h,W_{f,\varphi}^*K_{z}\rangle =\langle f\cdot (h\circ \varphi),K_{z}\rangle=f(z)h(\varphi(z))=\langle h,\overline{f(z)}K_{\varphi(z)}\rangle.$$
Thus $W_{f,\varphi}^*K_{z}=\overline{f(z)}K_{\varphi(z)}$.
\end{proof}

\section{Main Results}

In this section, we investigate weighted composition operators commuting with self-adjoint weighted composition operators on $F^2$ . In particular, we show that weighted composition operators commuting with some self-adjoint weighted composition operators on $F^2$  are normal. We assume that $\varphi$ is not an identity map.

\begin{thm}\label{thm5.1}
   Let $f\in H^\infty$ and let $\varphi$ be an analytic map of the unit disk into itself.If $W_{f,\varphi}$ is self-adjoint on $F^2$,then $f(0)$ and $\varphi'(0)$are real,and
$$f(z)=ce^{\overline{a}_0 z}, \ \varphi(z)=a_0+a_{1}z.$$
where $a_0=\varphi(0)$, $a_1=\varphi'(0)$, $c=f(0)$;
Conversely,let $a_0\in\mathbb{D}$,and let $c$ and $a_1$ be real numbers.
If
$$f(z)=ce^{\overline{a}_0z},\varphi(z)=a_0+a_{1}z,$$
then $W_{f,\varphi}$ is self-adjoint.
\end{thm}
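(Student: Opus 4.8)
The plan is to test self-adjointness against the reproducing kernels, which span a dense subspace of $F^2$ (if $h\perp K_w$ for every $w$ then $h(w)=\langle h,K_w\rangle=0$, so $h=0$). On the one hand, directly from the definition $(W_{f,\varphi}K_w)(z)=f(z)K_w(\varphi(z))=f(z)e^{\alpha\overline{w}\varphi(z)}$; on the other hand, Lemma~\ref{W*} gives $W_{f,\varphi}^*K_w=\overline{f(w)}K_{\varphi(w)}$, whence $(W_{f,\varphi}^*K_w)(z)=\overline{f(w)}e^{\alpha\overline{\varphi(w)}z}$. Since $W_{f,\varphi}$ is bounded and the kernels are total, $(W_{f,\varphi}-W_{f,\varphi}^*)K_w=0$ for all $w$ forces $W_{f,\varphi}=W_{f,\varphi}^*$; thus $W_{f,\varphi}$ is self-adjoint if and only if
\[
f(z)\,e^{\alpha\overline{w}\varphi(z)}=\overline{f(w)}\,e^{\alpha z\,\overline{\varphi(w)}}\qquad(z,w\in\mathbb C).
\]
This single functional equation drives both directions of the proof.

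For the forward direction, I would first set $w=0$. Writing $a_0=\varphi(0)$ and $c=f(0)$, the left factor becomes $f(z)$ and the right becomes $\overline{c}\,e^{\alpha\overline{a_0}z}$, so $f(z)=\overline{c}\,e^{\alpha\overline{a_0}z}$; this already forces the claimed exponential shape of $f$. Substituting this expression back into the functional equation and cancelling, the identity collapses to $\overline{c}/c=e^{E(z,w)}$ with $E(z,w)=\alpha\bigl(-\overline{w}\,g(z)+z\,\overline{g(w)}\bigr)$, where $g:=\varphi-a_0$ is entire with $g(0)=0$. Evaluating at $z=w=0$ gives $E=0$, hence $\overline{c}=c$, so $c=f(0)$ is real; consequently $E\equiv 0$, i.e. $\overline{w}\,g(z)=z\,\overline{g(w)}$ for all $z,w$.

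The key structural step is to read off $\varphi$ from $\overline{w}\,g(z)=z\,\overline{g(w)}$. Since $g$ is entire with $g(0)=0$, the quotient $g(z)/z$ extends to an entire function $h$, and the identity reads $h(z)=\overline{h(w)}$ for all nonzero $z,w$; thus $h$ is at once constant and equal to its own conjugate, hence a real constant $a_1$. Therefore $g(z)=a_1z$, i.e. $\varphi(z)=a_0+a_1z$ with $a_1=\varphi'(0)\in\mathbb R$, and, using that $c$ is real, $f(z)=c\,e^{\alpha\overline{a_0}z}$ (which is the asserted expression $f(z)=ce^{\overline{a_0}z}$ in the normalisation $\alpha=1$ adopted in the statement). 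For the converse one simply reverses this computation: assuming $c,a_1\in\mathbb R$, $\varphi(z)=a_0+a_1z$ and $f(z)=c\,e^{\alpha\overline{a_0}z}$, a direct substitution shows $E\equiv0$ and $\overline{c}/c=1$, so the functional equation holds on kernels; since $W_{f,\varphi}$ is bounded (its symbol and weight satisfy the standard boundedness criterion on $F^2$, in particular $|a_1|\le1$) and the kernels are dense, this yields $W_{f,\varphi}=W_{f,\varphi}^*$.

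I expect the main obstacles to be two technical points rather than the algebra. First, passing from the equality of exponentials $\overline{c}/c=e^{E}$ to the vanishing of the exponent $E$ must be justified by analyticity together with the base point $z=w=0$, so that no additive $2\pi i$ ambiguity can intrude. Second, the deduction that an entire $g$ satisfying $\overline{w}\,g(z)=z\,\overline{g(w)}$ is linear rests on the fact that a real-valued entire function is constant (open mapping theorem); this is the crux of recovering the affine form of $\varphi$ and the reality of $a_1$. The density-plus-boundedness reduction to reproducing kernels should also be stated carefully, but it is routine once boundedness of $W_{f,\varphi}$ is invoked.
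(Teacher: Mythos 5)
Your proposal is correct and follows essentially the same route as the paper: test self-adjointness on reproducing kernels to get the functional equation $f(z)e^{\overline{w}\varphi(z)}=\overline{f(w)}e^{\overline{\varphi(w)}z}$, specialize at $w=0$ to obtain $f(z)=ce^{\overline{a}_0z}$ with $c=f(0)$ real, substitute back to reduce to $\overline{w}(\varphi(z)-a_0)=z(\overline{\varphi(w)-a_0})$, and conclude $\varphi(z)=a_0+a_1z$ with $a_1\in\mathbb{R}$, with the converse by direct substitution. Your treatment is in fact slightly more careful than the paper's at the two points you flag (ruling out the additive $2\pi i$ ambiguity via continuity from the base point, and deducing linearity of $g=\varphi-a_0$ from the entire function $g(z)/z$ rather than the paper's terser degree-one-polynomial observation), but these are refinements of the same argument, not a different one.
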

\begin{proof}
The reproducing kernel function on Fock space $F^2$ is $K_{\beta}z=e^{z\overline{\beta}}$, then
$$W_{f,\varphi}K_\beta(z)=T_fC_{\varphi}e^{\overline{\beta}z}=f(z)e^{\varphi(z)\overline{\beta}},$$
and
$$W_{f,\varphi}^{*}K_\beta(z)=\overline{f(\beta)}e^{\overline{\varphi(\beta)}z}.$$
Thus $W_{f,\varphi}$ is self-adjoint if and only if
\begin{equation}\label{5-1}
f(z)e^{\varphi(z)\overline{\beta}}=\overline{f(\beta)}e^{\overline{\varphi(\beta)}z}.
\end{equation}

In particular, letting $\beta=0$ in (\ref{5-1}),we get
$$f(z)=\overline{f(0)} e^{\overline{\varphi(0)}z}.$$
for all $z$ in disk. Setting $z=0$,we get $f(0)=\overline{f(0)}$,
so that $f(0)\in\mathbb{R}$.Defining $c=f(0)$ and $a_0=\varphi(0)$,we can write $f$ as
\begin{equation}
\label{5-2}
f(z)=ce^{\overline{a}_0z}.
\end{equation}
Combining (\ref{5-1}) and (\ref{5-2}) we get
$$f(z)e^{\varphi(z)\overline{\beta}}=ce^{\overline{a}_0z}e^{\varphi(z)\overline{\beta}}$$
and
$$\overline{f(\beta)}e^{\overline{\varphi(\beta)}z}=ce^{a_0\overline{\beta}}e^{\overline{\varphi(\beta)}z},$$
which means $W_{f,\varphi}$ is self-adjoint if and only if
$$e^{\overline{a}_0z}e^{\varphi(z)\overline{\beta}}=e^{a_0\overline{\beta}}e^{\overline{\varphi(\beta)}z}.$$
Thus
\begin{equation}\label{5-3-1}
(\varphi(z)-a_0)\overline{\beta}=(\overline{\varphi(\beta)-a_0})z.
\end{equation}
Notice that the expression on the right is a polynomial of degree one in the variable $z$. This means that the expression on the left
$$(\varphi(z)-a_0)\overline{\beta}$$
must also be a polynomial of degree one in $z$.
Then take $\varphi(z)-a_0=a_1z$, we get
$$\varphi(z)=a_0+a_1z.$$
Since $\varphi$ is analytic, we have $a_1=\varphi'(0)$.In particular, combining the expression of $\varphi$ and (\ref{5-3-1}), we find $a_1=\overline{a}_1$, that is $a_1\in\mathbb R$.

Conversely, if $a_1\in\mathbb{R}$ and $a_0\in\mathbb{D}$ are such that $\varphi(z)=a_0+a_{1}z$ maps the disk into itself, and $f(z)=ce^{\overline{a}_0z}$ for $c\in\mathbb{R}$, then we have
\begin{align*}
    W_{f,\varphi}K_\beta(z)&=f(z)e^{\varphi(z)\overline{\beta}}\\
&=ce^{\overline{a}_0z}e^{(a_0+a_1z)\overline{\beta}}\\
&=ce^{(\overline{a}_0+a_1\overline{\beta})z+a_0\overline{\beta}}
\end{align*}
and
\begin{align*}
W_{f,\varphi}^*K_\beta(z)&=\overline{f(\beta)}e^{\overline{\varphi(\beta)}z}\\
&=ce^{a_0\overline{\beta}}e^{\overline{(a_0+a_1\beta})z}\\
&=ce^{(\overline{a}_0+a_1\overline{\beta})z+a_0\overline{\beta}}.
\end{align*}
So $W_{f,\varphi}K_\alpha=W_{f,\varphi}^*K_\alpha$ shows (\ref{5-1}) holds,which means $W_{f,\varphi}$ is self-adjoint.
\end{proof}

\begin{lem}\label{lm5.1}
    For $a_1\in\mathbb{R}$ and $a_0$ a complex number, let $\varphi(z)=a_0+a_{1}z$. Let $b$ be a fixed point in $\mathbb D$ of $\varphi$, and let $h(z)=(z-b)/(\bar{b}z-1)$,Then:\\
(1)$b=0$ when $a_0=0$ and 
$b=\frac{a_0}{1-a_1}$ for $a_0\neq0$~, ;\\
(2)~$h(\varphi(z))=\alpha h(z)$,where
$$\alpha=\frac{a_1(\bar{b}z-1)}{\bar{b}a_1z+\bar{b}a_0-1}.$$
\end{lem}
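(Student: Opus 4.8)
The plan is to handle the two parts separately, both reducing to elementary algebra once the defining relation for the fixed point is exploited. For part (1), I would start from the fixed-point equation $\varphi(b)=b$, that is $a_0+a_1 b=b$, which rearranges to $(1-a_1)b=a_0$. When $a_0\neq 0$ the very existence of an interior fixed point forces $a_1\neq 1$, so I can divide to obtain $b=a_0/(1-a_1)$; when $a_0=0$ the same equation reads $(1-a_1)b=0$, and again $a_1\neq 1$ yields $b=0$. This settles (1) at once.

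For part (2), the central idea is to substitute $\varphi(z)=a_0+a_1z$ directly into $h$ and then rewrite the numerator using the fixed-point identity from part (1). Concretely,
$$h(\varphi(z))=\frac{a_0+a_1z-b}{\bar b(a_0+a_1z)-1}.$$
The one step that really matters is to observe from $(1-a_1)b=a_0$ that $a_0-b=-a_1 b$, so that the numerator collapses:
$$a_0+a_1z-b=(a_0-b)+a_1z=a_1(z-b).$$
I expect this to be the only place where any genuine thought is required, and it is driven entirely by part (1): it manufactures the factor $(z-b)$, which is precisely the numerator of $h(z)$ and will cancel in the next step. The denominator is left untouched as $\bar b a_1 z+\bar b a_0-1$, so
$$h(\varphi(z))=\frac{a_1(z-b)}{\bar b a_1 z+\bar b a_0-1}.$$

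Finally I would divide by $h(z)=(z-b)/(\bar b z-1)$ to read off $\alpha$. Forming the quotient cancels the common factor $(z-b)$ and multiplies in $(\bar b z-1)$, giving
$$\alpha=\frac{h(\varphi(z))}{h(z)}=\frac{a_1(z-b)}{\bar b a_1 z+\bar b a_0-1}\cdot\frac{\bar b z-1}{z-b}=\frac{a_1(\bar b z-1)}{\bar b a_1 z+\bar b a_0-1},$$
which is exactly the claimed expression. I would add the remark that $\alpha$ genuinely depends on $z$: this is expected, since $\varphi$ is only affine and not a disk automorphism in general, so the conjugate $h\circ\varphi\circ h^{-1}$ fixes $0$ but need not be a mere rotation. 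The sole caveat to track throughout is the degenerate case $a_1=1$, which I exclude precisely because the existence of the interior fixed point $b$ rules it out.
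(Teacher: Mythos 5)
Your proof is correct and essentially identical to the paper's: both use the fixed-point identity $a_0+a_1b=b$ to collapse the numerator of $h(\varphi(z))$ into $a_1(z-b)$ and then factor out $h(z)$ to read off $\alpha$. The only nitpick is your justification of $a_1\neq 1$ in the case $a_0=0$ --- there the existence of an interior fixed point does \emph{not} rule out $a_1=1$ (the identity map fixes every point); instead $a_1\neq 1$ follows from the paper's standing assumption, stated at the start of Section 3, that $\varphi$ is not the identity map.
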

\begin{proof}
    (1) If $a_0=0$, then $a_1b=b$, since $a_1\neq1$,thus $b=0$.
    
If $a_0\neq0$,we have $a_0+a_1b=b$,
By a simple calculation, we can get $b=\frac{a_0}{1-a_1}$.\\
(2)According to $a_0+a_1b=b$,we have $a_1(z-b)=a_1z-a_1b=a_1z+a_0-b$,then:
\begin{align*}
h(\varphi(z))&=\frac{\varphi(z)-b}{\overline{b}\varphi(z)-1}\\
&=\frac{a_0+a_1z-b}{\overline{b}(a_0+a_1z)-1}\\
&=\frac{a_1z+a_0-b}{\overline{b}a_1z+\overline{b}a_0-1}\\
&=\frac{a_1(z-b)}{\overline{b}a_1z+\overline{b}a_0-1}\\
&=a_1\cdot \frac{z-b}{\overline{b}z-1} \cdot \frac{\overline{b}z-1}{\overline{b}a_1z+\overline{b}a_0-1}\\
&=\alpha\frac{z-b}{\overline{b}z-1}=\alpha h(z).
\end{align*}
where
$$\alpha=\frac{a_1(\overline{b}z-1)}{\bar{b}a_1z+\overline{b}a_0-1}.$$
Therefore $h(\varphi(z))=\alpha h(z)$.
\end{proof}

\begin{lem}\label{lm5.2}
   Let $a_1\in\mathbb{R}$ be real, then $\varphi(z)=a_0+a_1z$ maps the open unit disk into itself if and only if
$$
\begin{cases}
\left |a_0  \right | < 1;\\
-1+\left |a_0  \right |\le a_1\le 1-\left |a_0  \right |.
\end{cases}
$$
\end{lem}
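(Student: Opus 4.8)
The plan is to first unpack the stated condition. Since $a_1$ is real, the inequality $-1+|a_0|\le a_1\le 1-|a_0|$ is simply $|a_1|\le 1-|a_0|$, which is in turn equivalent to $|a_0|+|a_1|\le 1$. So the real content of the lemma is the equivalence between $\varphi(\mathbb{D})\subseteq\mathbb{D}$ and the pair of conditions $|a_0|<1$ together with $|a_0|+|a_1|\le 1$. The geometric picture that drives everything is that as $z$ runs over the open disk, $a_1 z$ runs over the open disk of radius $|a_1|$ centered at the origin, so $\varphi(z)=a_0+a_1z$ runs over the open disk of radius $|a_1|$ centered at $a_0$; the largest modulus attainable (in the closure) by a point of that disk is exactly $|a_0|+|a_1|$.

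For the sufficiency direction I would argue directly with the triangle inequality. Assuming $|a_0|<1$ and $|a_0|+|a_1|\le 1$, for any $z$ with $|z|<1$ I get
$$|\varphi(z)|=|a_0+a_1z|\le |a_0|+|a_1|\,|z|.$$
If $a_1\neq 0$, then $|a_1|\,|z|<|a_1|$ because $|z|<1$, so $|a_0|+|a_1|\,|z|<|a_0|+|a_1|\le 1$; if $a_1=0$, then $|\varphi(z)|=|a_0|<1$ by hypothesis. Either way $|\varphi(z)|<1$, so $\varphi$ maps $\mathbb{D}$ into $\mathbb{D}$. The point worth flagging here is that the hypothesis $|a_0|<1$ is genuinely needed, and is not implied by $|a_0|+|a_1|\le 1$ alone: the corner case $a_1=0$, $|a_0|=1$ satisfies $|a_0|+|a_1|=1$ yet sends every point to the boundary, hence fails to map into the open disk.

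For the necessity direction I would extract the two conditions one at a time. Taking $z=0$ in $|\varphi(z)|<1$ immediately gives $|a_0|<1$. To recover $|a_0|+|a_1|\le 1$, I would push $z$ toward the boundary along the phase that makes the two terms add constructively: when $a_0,a_1\neq 0$, choose a unit vector $\zeta$ so that $a_1\zeta$ is a positive real multiple of $a_0$, and set $z=r\zeta$ with $0<r<1$; then $|\varphi(r\zeta)|=|a_0|+r|a_1|$, and letting $r\to 1^-$ in the inequality $|a_0|+r|a_1|<1$ yields $|a_0|+|a_1|\le 1$. The degenerate subcases ($a_0=0$, giving $|a_1|\le 1$ from $|a_1z|<1$; or $a_1=0$, which is vacuous for this step) are handled separately. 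Combining the two directions gives the claimed equivalence.

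The main obstacle, and the only subtle point, is the interplay between the strict membership $\varphi(\mathbb{D})\subseteq\mathbb{D}$ (open into open) and the non-strict bound $|a_0|+|a_1|\le 1$. One must be careful that the extremal value $|a_0|+|a_1|$ is approached but never attained inside the open disk when $a_1\neq 0$, which is exactly why a strict hypothesis can coexist with a non-strict conclusion; conversely, this is why the boundary case requires the separate condition $|a_0|<1$. Keeping the strict and non-strict inequalities straight across these corner cases is where the care is needed; the rest is routine.
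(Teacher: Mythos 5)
Your proof is correct, and it takes a genuinely different route from the paper's. You argue directly: sufficiency via the triangle inequality, $|\varphi(z)|\le|a_0|+|a_1||z|<|a_0|+|a_1|\le 1$ when $a_1\neq 0$ (with the strictness of $|z|<1$ absorbing the non-strict bound), and necessity by evaluating at $z=0$ and then pushing $z=r\zeta$ toward the boundary along the direction where $a_1\zeta$ aligns with $a_0$, letting $r\to 1^-$. The paper instead reduces to a one-dimensional real problem: it rotates, setting $\widetilde{\varphi}(z)=e^{i\theta}\varphi(e^{-i\theta}z)$ with $a_0e^{i\theta}=|a_0|$, so as to assume $a_0\ge 0$ real; it then invokes Theorem 10 of \cite{cowl} (a linear fractional map takes $\mathbb{D}$ into itself if and only if it takes the interval $(-1,1)$ into itself), uses monotonicity of $\varphi$ on $(-1,1)$ (since $\varphi'=a_1$ is real), and reads the answer off the endpoint conditions $\varphi(\pm 1)\in[-1,1]$. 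Your approach buys self-containedness and slightly more generality: it uses no external theorem and no rotation reduction, and it works verbatim for complex $a_1$ (the condition becoming $|a_0|+|a_1|\le 1$), whereas the paper's monotonicity step genuinely needs $a_1$ real. The paper's approach buys a reusable framework for linear fractional symbols, where checking an interval condition can be easier than optimizing a modulus. Your explicit treatment of the corner cases --- in particular flagging that $|a_0|<1$ is not implied by $|a_0|+|a_1|\le 1$ (e.g. $a_1=0$, $|a_0|=1$), and that the extremal value $|a_0|+|a_1|$ is approached but never attained on the open disk --- is a point the paper handles only implicitly by extracting $\varphi(0)\in\mathbb{D}$ separately.
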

\begin{proof}
    (1)Suppose $a_0$ be real ans non-negative, to show that
$$0\le a_0<1,-1+a_0 \le a_1\le 1-a_0.$$
Since $\varphi$ is a linear fractional map, Theorem 10 of \cite{cowl} says that $\varphi$ maps the open unit disk into itself if and only if $\varphi$ maps the open interval (-1,1) into itself. In particular, if $\varphi$ maps the open disk into itself, then $\varphi(0)=a_0$ must
lie in the open unit disk; that is $0\le a_0<1$.
Moreover, ${\varphi}'(z)=a_1$, so $\varphi$ is either increasing or decreasing on (-1,1), depending on the sign of $a_1$.
Thus $0\le a_0<1$, $\varphi$ maps the open unit disk into itself if and only if $\varphi(-1)$ and $\varphi(1)$ are in the $[-1,1]$.

Since
$$\varphi(-1)=a_0-a_1,\varphi(1)=a_0+a_1,$$
we see 
$$-1\le a_0-a_1\le 1,-1\le a_0+a_1\le 1,$$
thus $-1+a_0 \le a_1\le 1-a_0.$

(2)Choosing $\theta$ so that $\widetilde{a_0}(z)=a_0e^{i\theta}=|a_0|$, and let $\widetilde{\varphi}(z)=e^{i\theta}\varphi(e^{-i\theta}z)$, then we have $\widetilde{\varphi}(z)=\widetilde{a_0}+a_1z$ and $\widetilde{a_0}=a_0e^{i\theta}$,
then $\widetilde{\varphi}(z)$ satisfies the hypotheses of (1),
according to the result of (1), we get $\varphi(z)=a_0+a_1z$ maps the open unit disk into itself if and only if
$$\left |a_0  \right | < 1,-1+\left |a_0  \right |\le a_1\le 1-\left |a_0  \right |.$$
\end{proof}

\begin{lem}\label{lm5.3}
Let $\varphi(z)=a_0+a_1z$, where $\left | a_0 \right |<1$ and $-1+|a_0|\leq a_1<(1-|a_0|)$.
If $W_{f,\varphi}$ is self-adjoint, then $W_{f,\varphi}e_0=\overline{f(b)} e_0$, $W_{f,\varphi}e_j=\overline{f(b)}\alpha^{j}e_j$,
where
$$\alpha=\frac{a_1(\bar{b}z-1)}{\bar{b}a_1z+\bar{b}a_0-1}, \ \ \ \ e_j(z)=e^{\bar{b}z-\frac{1}{2}|b|^2}\left ( \frac{z-b}{\bar{b}z-1}  \right )^j,\ \ \ \  j=0, 1, \cdots,$$
$b\in \mathbb D$ is a fixed point of $\varphi$.
\end{lem}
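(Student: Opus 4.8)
The plan is to reduce everything to the explicit formulas already available and then to perform one direct computation. Since $W_{f,\varphi}$ is assumed self-adjoint, Theorem~\ref{thm5.1} supplies the concrete forms $f(z)=ce^{\overline{a}_0 z}$ with $c=f(0)\in\mathbb R$ and $\varphi(z)=a_0+a_1z$ with $a_1\in\mathbb R$. The hypothesis $|a_0|<1$, $-1+|a_0|\le a_1<1-|a_0|$ guarantees (via Lemma~\ref{lm5.2}) that $\varphi$ maps the disk into itself and, because $a_1\neq 1$, that $\varphi$ has a unique fixed point $b=a_0/(1-a_1)$ as in Lemma~\ref{lm5.1}. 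I would record at the outset the fixed-point relation $a_0+a_1b=b$ together with its conjugated form $\overline{b}=\overline{a}_0/(1-a_1)$ (using $a_1\in\mathbb R$), since these are what make the final identity collapse.

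Next I would rewrite the eigenfunctions in the factored shape $e_j(z)=e^{\overline{b}z-\frac12|b|^2}\,h(z)^j$, where $h(z)=(z-b)/(\overline{b}z-1)$ is exactly the Möbius factor of Lemma~\ref{lm5.1}. Applying the operator gives $W_{f,\varphi}e_j(z)=f(z)\,e_j(\varphi(z))=f(z)\,e^{\overline{b}\varphi(z)-\frac12|b|^2}\,h(\varphi(z))^j$, and here I would substitute the transformation law $h(\varphi(z))=\alpha\,h(z)$ from part~(2) of Lemma~\ref{lm5.1}, which converts the last factor into $\alpha^j h(z)^j$. This already isolates the claimed factor $\alpha^j$ and reduces the problem to showing that the remaining scalar-and-exponential part equals $\overline{f(b)}$ times the exponential appearing in $e_j$.

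The one substantive computation is the exponential identity
$$f(z)\,e^{\overline{b}\varphi(z)}=\overline{f(b)}\,e^{\overline{b}z}.$$
Expanding the left side with $f(z)=ce^{\overline{a}_0 z}$ and $\varphi(z)=a_0+a_1z$ yields $c\,e^{(\overline{a}_0+a_1\overline{b})z+\overline{b}a_0}$, while the right side is $c\,e^{a_0\overline{b}}e^{\overline{b}z}$ (using $c\in\mathbb R$ and $\overline{f(b)}=ce^{a_0\overline{b}}$). The constant exponents agree, so matching the coefficient of $z$ amounts to the single relation $\overline{a}_0+a_1\overline{b}=\overline{b}$. This is precisely where the fixed-point equation enters: $\overline{a}_0+a_1\overline{b}=\overline{a}_0+a_1\cdot\frac{\overline{a}_0}{1-a_1}=\frac{\overline{a}_0}{1-a_1}=\overline{b}$. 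I expect this reality-and-fixed-point bookkeeping to be the only real obstacle; once it is in place, combining the two displays gives $W_{f,\varphi}e_j=\overline{f(b)}\,\alpha^j e_j$, and setting $j=0$ (so that $\alpha^0=1$ and $e_0=e^{\overline{b}z-\frac12|b|^2}$) recovers $W_{f,\varphi}e_0=\overline{f(b)}e_0$ as the special case.
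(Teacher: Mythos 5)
Your proof is correct, and it shares the paper's overall skeleton --- factor $e_j=e_0h^j$ with $h(z)=(z-b)/(\overline{b}z-1)$ and convert $h(\varphi(z))^j$ into $\alpha^jh(z)^j$ via Lemma~\ref{lm5.1}(2) --- but it establishes the exponential (``$e_0$'') part by a genuinely different route. The paper never expands $f$: for $j=0$ it uses self-adjointness as an operator identity together with Lemma~\ref{W*}, writing $W_{f,\varphi}e_0=W_{f,\varphi}^*\bigl(e^{-\frac{1}{2}|b|^2}K_b\bigr)=\overline{f(b)}\,e^{-\frac{1}{2}|b|^2}K_{\varphi(b)}=\overline{f(b)}\,e_0$, the last equality being just $\varphi(b)=b$; the case $j\geq 1$ then reuses this identity. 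You instead import the classification from Theorem~\ref{thm5.1} ($f(z)=ce^{\overline{a}_0z}$ with $c,a_1\in\mathbb{R}$) and verify the same identity $f(z)e^{\overline{b}\varphi(z)}=\overline{f(b)}\,e^{\overline{b}z}$ (equivalently $W_{f,\varphi}K_b=\overline{f(b)}K_b$) by matching exponents, which comes down to the conjugated fixed-point relation $\overline{a}_0+a_1\overline{b}=\overline{b}$. Both arguments are complete and correct; the paper's is shorter and needs no formula for $f$ (only self-adjointness, the kernel adjoint formula, and the fixed point), while yours makes explicit exactly which algebraic ingredients (reality of $c$ and $a_1$, the fixed-point equation) force the eigenvalue relation, at the cost of invoking Theorem~\ref{thm5.1}. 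A minor stylistic gain on your side: treating all $j\geq 0$ uniformly and recovering $j=0$ as the special case $\alpha^0=1$ avoids the paper's two-case split.
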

\begin{proof}
If $j=0$, by lemma \ref{W*}
\begin{align*}
W_{f,\varphi}e_0&=W_{f,\varphi}^*e^{-\frac{1}{2}|b|^2}K_b=\overline{f(b)}e^{-\frac{1}{2}|b|^2}K_{\varphi(b)}\\
&=\overline{f(b)}e^{-\frac{1}{2}|b|^2}K_b=\overline{f(b)}e_0.
\end{align*}
If $j\geq1$, then $e_j=e_0h^j$,where $h$ defined by lemma \ref{lm5.1}.Then
\begin{align*}
W_{f,\varphi}e_j&=W_{f,\varphi}e_0h^j=W_{f,\varphi}(e_0)C_{\varphi}h^j\\
&=\big(\overline{f(b)}e_0 \big)\big ( h\circ \varphi \big )^j =\overline{f(b)}\alpha^j(e_0h^j)\\
&=\overline{f(b)}\alpha^je_j.
\end{align*}
Thus, each $e_j $ is an eigenvector of $W _ {f,\ varphi} $.
\end{proof}

Next we will characterize the total weighted composition operators that are commutative with $W_{f,\varphi}$ on the Fock space, 
where $\varphi$ is analytic self-mapping of nonelliptic automorphisms on $\mathbb{D}$.
\begin{lem}\label{lm5.4}
    Let $g\in H^\infty$ which has no zero on $\mathbb D$,and let ~$\psi$ be an analytic map of $\mathbb{D}$ into itself.Assume that $W_{g,\psi}\in \left\{W_{f,\varphi}\right\}'$ where $W_ {f,\varphi}$ is self-adjoint.If $\varphi(b)=b$, then $\psi(b)=b$.
\end{lem}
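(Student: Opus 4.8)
The plan is to use the fact that the reproducing kernel at a fixed point of $\varphi$ is an eigenvector of the self-adjoint operator $W_{f,\varphi}$, and then to transport the commutation relation onto such kernels by passing to adjoints, where Lemma \ref{W*} supplies clean formulas. First I would record the structural facts. Since $W_{f,\varphi}$ is self-adjoint, Theorem \ref{thm5.1} gives $f(z)=ce^{\overline{a}_0 z}$ and $\varphi(z)=a_0+a_1 z$ with $a_1\in\mathbb{R}$; in particular $f$ is zero-free (as $c=f(0)\neq0$ for a nonzero operator), and, because $\varphi$ is not the identity, $a_1\neq1$, so $\varphi$ has the single fixed point $b=a_0/(1-a_1)$. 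Using Lemma \ref{W*} together with $\varphi(b)=b$, we get $W_{f,\varphi}K_b=W_{f,\varphi}^{*}K_b=\overline{f(b)}K_{\varphi(b)}=\overline{f(b)}K_b$, so $K_b$ is an eigenvector of $W_{f,\varphi}$.

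Next I would convert the hypothesis $W_{f,\varphi}W_{g,\psi}=W_{g,\psi}W_{f,\varphi}$ into a statement about adjoints: taking adjoints and using $W_{f,\varphi}^{*}=W_{f,\varphi}$ shows $W_{f,\varphi}W_{g,\psi}^{*}=W_{g,\psi}^{*}W_{f,\varphi}$, i.e.\ $W_{g,\psi}^{*}\in\{W_{f,\varphi}\}'$ as well. Applying this to $K_b$ and evaluating each side with Lemma \ref{W*} gives, on one side, $W_{g,\psi}^{*}W_{f,\varphi}K_b=\overline{f(b)}\,W_{g,\psi}^{*}K_b=\overline{f(b)}\,\overline{g(b)}\,K_{\psi(b)}$, and on the other, $W_{f,\varphi}W_{g,\psi}^{*}K_b=\overline{g(b)}\,W_{f,\varphi}K_{\psi(b)}=\overline{g(b)}\,\overline{f(\psi(b))}\,K_{\varphi(\psi(b))}$. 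The reason for working with $W_{g,\psi}^{*}$ rather than $W_{g,\psi}$ is precisely that Lemma \ref{W*} describes the action of the adjoint on kernels, whereas $W_{g,\psi}K_b$ has no comparably clean form.

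Equating these two expressions yields $\overline{f(b)}\,\overline{g(b)}\,K_{\psi(b)}=\overline{g(b)}\,\overline{f(\psi(b))}\,K_{\varphi(\psi(b))}$. Here I would cancel the scalar $\overline{g(b)}$, which is nonzero because $g$ has no zero on $\mathbb{D}$, and note that $\overline{f(b)}$ and $\overline{f(\psi(b))}$ are nonzero because $f$ is a nonvanishing exponential; this leaves $K_{\psi(b)}$ proportional to $K_{\varphi(\psi(b))}$. Since the kernels $K_u=e^{\alpha\overline{u}z}$ are proportional only when the points coincide (differentiate at $z=0$ to recover $\overline{u}$), I conclude $\varphi(\psi(b))=\psi(b)$, so $\psi(b)$ is a fixed point of $\varphi$. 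By the uniqueness of the fixed point of the non-identity affine map $\varphi$, this forces $\psi(b)=b$.

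I expect the main difficulty to be organizational rather than computational: the key move is recognizing that one must commute with $W_{g,\psi}^{*}$ and apply both operators to the eigenvector $K_b$, after which everything reduces to a kernel identity. The one point demanding care is verifying that every scalar factor produced is nonzero, since the cancellations are exactly where the hypotheses that $g$ is zero-free and that $f$ is a nonvanishing exponential enter, and the step $\varphi(\psi(b))=\psi(b)\Rightarrow\psi(b)=b$ is where the assumption that $\varphi$ is not the identity (hence has a unique fixed point) is essential.
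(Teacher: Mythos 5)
Your proof is correct, but it takes a genuinely different route from the paper's. The paper argues by cases: for $b=0$ it uses the product formula of Lemma \ref{wf} to get $\psi(a_1z)=a_1\psi(z)$ and then a power-series coefficient comparison; for $b\neq 0$ it shows, exactly as you do, that $W_{g,\psi}^{*}K_b=\overline{g(b)}K_{\psi(b)}$ is an eigenvector of $W_{f,\varphi}$ with eigenvalue $\overline{f(b)}$, but then stops there and invokes the one-dimensionality of the eigenspace, $\ker(W_{f,\varphi}-\overline{f(b)})=\mathrm{span}\{e_0\}=\mathrm{span}\{K_b\}$ (via Lemma \ref{lm5.3}), with further sub-cases on whether $a_1=\pm(1-|a_0|)$. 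Your proof replaces that spectral input with one more application of Lemma \ref{W*}: since $W_{f,\varphi}=W_{f,\varphi}^{*}$, you can compute $W_{f,\varphi}K_{\psi(b)}=\overline{f(\psi(b))}K_{\varphi(\psi(b))}$ explicitly, so the eigenvector identity becomes $K_{\psi(b)}\propto K_{\varphi(\psi(b))}$, giving $\varphi(\psi(b))=\psi(b)$ and hence $\psi(b)=b$ by uniqueness of the fixed point of the non-identity affine map $\varphi$. This buys you a single unified argument covering $b=0$ and $b\neq 0$ with no case analysis on $a_1$, and it avoids relying on the eigenspace claim of Lemma \ref{lm5.3}, which in the paper is somewhat delicate (the quantity $\alpha$ there is written as a $z$-dependent expression, so the one-dimensionality assertion needs care). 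What the paper's route buys in exchange is structural information that gets reused later: the identification $W_{g,\psi}^{*}K_b=\nu K_b$ is essentially the same mechanism that drives the computation of the symbols in Theorem \ref{thm5.2}. One further small merit of your write-up: you make explicit the standing assumption $c=f(0)\neq 0$ (equivalently $W_{f,\varphi}\neq 0$), without which the lemma is false and which the paper leaves implicit.
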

\begin{proof}
    By Theorem \ref{thm5.1}, $f(z)=ce^{\overline{a}_0z}$,~$\varphi(z)=a_0+a_1z$,
$c=f(0), \ a_0=\varphi(0), \ a_1=\varphi'(0)$ where $a_0\in\mathbb{D}$, $c\in \mathbb R, \ a_1\in\mathbb{R}$.

Below we divide $b$ is zero or not two cases to discuss.
  (1) When $b=0$, $f(z)=c=f(0)$, $\varphi(z)=a_1z$ and $a_1\neq 1$~(Otherwise $\varphi$ is identity).
  By Lemma \ref{wf}, we have
  $$W_{f, \varphi} W_{g, \psi}=M_{f \cdot(g \circ \varphi)} C_{\psi\circ\varphi},\ \ \ \
 W_{g, \psi}W_{f, \varphi}=M_{g \cdot(f \circ \psi)} C_{\varphi\circ\psi}.$$
 Since $W_{g,\psi}\in \left\{W_{f,\varphi}\right\}'$, $\psi(a_1z)=a_1\psi(z)$.
 Set $\psi(z)=\sum\limits_{k=0}^{\infty}\gamma_kz^k$, then we get $\gamma_0=a_1\gamma_0$, since$a_1\neq 1$,
 hence $\gamma_0=0$, thus $\psi(0)=0$.

(2) Assume $b\neq0$, by Lemma \ref{W*}
\begin{align*}
W_{f,\varphi}K_b= W_{f,\varphi}^*K_b=\overline{f(b)}K_b.
\end{align*}
Since $W_{g,\psi}\in \left\{W_{f,\varphi}\right\}'$,
\begin{align*}
W_{f,\varphi}^*W_{g,\psi}^*K_b
=W_{g,\psi}^*W_{f,\varphi}^*K_b
=\overline{f(b)}W_{g,\psi}^*K_b.
\end{align*}
On the other hand, $g$ has no zero, so
\begin{align*}
W_{g,\psi}^*K_b=\overline{g(b)}K_{\psi(b)}(z)=\overline{g(b)}e^{\overline{\psi(b)}z} \neq0.
\end{align*}
thus, $W_{g,\psi}^*K_b$ is an eigenvector of $W_{f,\varphi}^*=W_{f,\varphi}$ corresponding to an eigenvalue $\overline{f(b)}$. 
If $a_1=1-|a_0|$, then $|b| =1$,it is clear that $b$ is not in $\mathbb{D}$. 
If $a_1=-1+|a_0|$, then $\varphi(z)=a_0+(-1+|a_0|)z$.
By $\varphi(\psi(z))=\psi(\varphi(z))$
\begin{align*}
    \psi(\varphi(b))=\psi(b)=\varphi(\psi(b)),
\end{align*}
then we get $\psi(b)=\frac{a_0}{2-|a_0|}$, so $b=\frac{a_0}{1-a_1}=\frac{a_0}{2-|a_0|}$, thus $\psi(b)=b$.

Finally if $-1+{|a_0|}^2<a_1<{(1-|a_0|)}^2$,
we know from Lemma \ref{lm5.3}and $W_{f,\varphi}$ is self-adjoint,
that $\mathrm{ker}~[W_{f,\varphi}-f(b)]^*=\mathrm{span}~\{e_0\}$, where $e_0=e^{-\frac{1}{2}|b|^2}K_b$.
Thus, exist $\nu\in\mathbb{C}$ such that $W_{g,\psi}^*K_b=\nu K_b$.
then
$$\nu K_b=W_{g,\psi}^*K_b=\overline{g(b)}K_{\psi(b)},$$
since
\begin{align*}
\nu e^{\overline{b}z}=\overline{g(b)}e^{\overline{\psi(b)}z},
\end{align*}
Therefore $\psi(b)=b$.
\end{proof}

Next, we give a sufficient condition that $W_{f,\varphi}$ and $W_{g,\psi}$ can be commuted.
\begin{thm}\label{thm5.2}
 Let $g\in H^\infty$ which has no zero on $\mathbb D$, and let ~$\psi$ be an analytic map of $\mathbb{D}$ into itself.Assume that $W_ {f,\varphi}$ is self-adjoint and $\varphi$, not an elliptic automorphism, has a fixed point $b$ $\mathbb{D}$. Then $W_{g,\psi}\in \{W_{f,\varphi}\}^{'}$ if $W_{g,\psi}$ has the following symbol functions; for $b\neq0$
$$ \psi(z)=d_0+\frac{d_2z}{1-d_1z}\  and \ g(z)=g(b)\exp\left \{ \overline{b}\left (  z+\frac{-d_0+d_3z}{1-d_0z}\right )   \right \}$$
where $d_0=\frac{(\eta-1)b}{{|b|}^{2}\eta-1}$,
$d_1=\frac{(\eta-1)\overline{b}}{{|b|}^{2}\eta-1}$,
$d_2=\eta\frac{({|b|}^{2}-1)^2}{({|b|}^{2}\eta-1)^2}$, and $d_3=\frac{|b|^{2}-\eta}{{|b|}^{2}\eta-1}$ for some $\eta\in\mathbb{C}$ and 
for $b=0$
$$\psi(z)= z,\ \ g(z)=g(0).$$
The converse is not true.
\end{thm}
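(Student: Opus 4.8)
The plan is to use Lemma~\ref{wf} to turn the operator identity $W_{f,\varphi}W_{g,\psi}=W_{g,\psi}W_{f,\varphi}$ into two scalar conditions on the symbols. That lemma gives
$$W_{f,\varphi}W_{g,\psi}=M_{f\cdot(g\circ\varphi)}\,C_{\psi\circ\varphi},\qquad W_{g,\psi}W_{f,\varphi}=M_{g\cdot(f\circ\psi)}\,C_{\varphi\circ\psi}.$$
A product $M_uC_\sigma$ with $u$ nowhere zero determines both $\sigma$ and $u$ (evaluate on $h\equiv1$ to recover $u$, then on $h(z)=z$ to recover $\sigma$), so the two operators agree precisely when
$$\psi\circ\varphi=\varphi\circ\psi\qquad\text{and}\qquad f\cdot(g\circ\varphi)=g\cdot(f\circ\psi).$$
Thus the theorem reduces to verifying these two identities for the explicit symbols in the statement. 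Before starting I would record the normalisations from Theorem~\ref{thm5.1} and Lemma~\ref{lm5.1}: $f(z)=ce^{\overline{a}_0z}$ and, writing the fixed point as $b=a_0/(1-a_1)$, $\varphi(z)=b+a_1(z-b)$, so that $a_0=b(1-a_1)$ and $\overline{a}_0=\overline{b}(1-a_1)$ because $a_1$ is real; since $g$ is zero-free I would also write $g=g(b)e^{G}$ with $G(z)=\overline{b}\big(z+\tfrac{-d_0+d_3z}{1-d_0z}\big)$.

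The case $b=0$ is immediate: then $a_0=0$, so $f\equiv c$ and $\varphi(z)=a_1z$, while the prescription gives $\psi(z)=z$ and $g\equiv g(0)$; hence $W_{g,\psi}=g(0)\,I$ is a scalar multiple of the identity and commutes trivially. All the content is in the case $b\neq0$, and I would attack the two identities separately.

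For the weight identity I would divide by $f\cdot g$ and use $f=ce^{\overline{a}_0z}$ to rewrite $f\cdot(g\circ\varphi)=g\cdot(f\circ\psi)$ as $(g\circ\varphi)/g=e^{\overline{a}_0(\psi-\mathrm{id})}$; taking the zero-free logarithm $G$ this collapses to the single scalar equation
$$G(\varphi(z))-G(z)=\overline{a}_0\big(\psi(z)-z\big).$$
Here the left side equals $\overline{b}\big[(a_1-1)(z-b)+\tfrac{-d_0+d_3\varphi(z)}{1-d_0\varphi(z)}-\tfrac{-d_0+d_3z}{1-d_0z}\big]$ and the right side equals $\overline{b}(1-a_1)(\psi(z)-z)$, so after clearing the denominators $1-d_0\varphi(z)$ and $1-d_0z$ one is left with a polynomial identity in $z$ to be checked coefficient by coefficient against the definitions of $d_0,d_2,d_3$. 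The delicate feature is that $d_2$ and $d_3$ are engineered so that the difference $G\circ\varphi-G$ reduces exactly to $(1-a_1)(\psi-\mathrm{id})$ with no leftover terms; confirming this cancellation is the bulk of the routine computation.

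I expect the composition identity $\psi\circ\varphi=\varphi\circ\psi$ to be the main obstacle, because it is a statement about the two symbols as M\"obius maps and is where the geometry must line up. Two nonidentity linear fractional maps commute under composition precisely when they share the same pair of fixed points (apart from exceptional involutive cases), so the verification amounts to checking that the prescribed $d_0,d_1,d_2$ force $\psi$ to have the same fixed points as $\varphi$. A direct computation confirms $\psi(b)=b$ (using $1-d_1b=\tfrac{|b|^2-1}{|b|^2\eta-1}$, consistent with Lemma~\ref{lm5.4}), and the second fixed point of $\psi$ is the root of $d_1z^2-(1+d_0d_1-d_2)z+d_0=0$ other than $b$; since $\varphi$ fixes $b$ and $\infty$, the whole identity comes down to whether this second fixed point is compatible with that of $\varphi$. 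This is the step I would scrutinise most carefully, substituting $\varphi(z)=b+a_1(z-b)$ into $\psi$, clearing the denominator $1-d_1\varphi(z)$, and reducing to a polynomial identity in $z$ that must hold identically. Once both identities are established, Lemma~\ref{wf} yields the commutation; the failure of the converse reflects the restrictive hypotheses (that $g$ be zero-free and $\psi$ a self-map), which exclude other commuting pairs.
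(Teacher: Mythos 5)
Your proposal attempts the wrong direction of the theorem, and that direction is in fact false. Despite the statement's wording (``$W_{g,\psi}\in\{W_{f,\varphi}\}'$ \emph{if} $W_{g,\psi}$ has the following symbol functions''), the paper's proof runs the other way: it \emph{assumes} commutation and derives the symbol formulas, and its closing ``Conversely'' passage is a \emph{counterexample} (taking $\varphi(z)=\tfrac12+\tfrac14z$, $b=\tfrac23$) showing that the displayed symbols need not commute --- that is what ``the converse is not true'' means there. Your plan founders precisely at the step you yourself flagged as the main obstacle: the identity $\psi\circ\varphi=\varphi\circ\psi$ does not hold. From (\ref{psiz}), $\psi$ is the conjugate of $w\mapsto\eta w$ by the involution $h(z)=(z-b)/(\overline{b}z-1)$, so $\psi$ fixes $b$ and $1/\overline{b}$, whereas the affine map $\varphi$ fixes $b$ and $\infty$; since $b\neq0$ these fixed-point pairs are distinct, so the two M\"obius maps commute only in the degenerate case $\eta=1$, i.e. $\psi=\mathrm{id}$ (exactly the situation isolated later in Theorem \ref{thm5.3}). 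No amount of coefficient-checking against $d_0,d_1,d_2,d_3$ can rescue the two identities you reduced to; they are simply false for generic $\eta$. Your reading of ``the converse is not true'' (as a failure of commuting $\Rightarrow$ symbols, blamed on the zero-free/self-map hypotheses) is likewise inverted relative to the paper.

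What the paper actually proves, and what a correct write-up needs, is the necessity direction: assume $W_{g,\psi}\in\{W_{f,\varphi}\}'$; then $W_{f,\varphi}W_{g,\psi}K_b=W_{g,\psi}W_{f,\varphi}K_b=\overline{f(b)}\,W_{g,\psi}K_b$, and since $g$ is zero-free, $W_{g,\psi}K_b\neq0$ is an eigenvector of the self-adjoint operator $W_{f,\varphi}$. By Lemma \ref{lm5.3} the relevant eigenspaces are one-dimensional, $\ker(W_{f,\varphi}-\overline{f(b)})=\mathrm{span}\{e_0\}$ and $\ker(W_{f,\varphi}-\overline{f(b)}\alpha)=\mathrm{span}\{e_1\}$, so first $W_{g,\psi}K_b=\delta K_b$, and then applying the same eigenvector argument to $W_{g,\psi}e_1$ forces
\begin{equation*}
\frac{\psi(z)-b}{\overline{b}\psi(z)-1}=\eta\,\frac{z-b}{\overline{b}z-1},
\end{equation*}
which is solved to get (\ref{psiz}) and hence $d_0,d_1,d_2$; finally $g(z)e^{\overline{b}\psi(z)}=\delta e^{\overline{b}z}$ with $z=b$ gives $\delta=g(b)$ and the stated formula for $g$. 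None of this eigenvector machinery (Lemma \ref{W*}, Lemma \ref{lm5.3}, one-dimensionality of the kernels) appears in your proposal, and it cannot be replaced by the direct verification you describe, because the statement being verified is false. Only your $b=0$ case (where $W_{g,\psi}=g(0)I$ trivially commutes) is correct as written.
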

\begin{proof}
If $W_{f,\varphi}$ is self-adjoint on $F^2$, by theorem \ref{thm5.1}, $f(z)=ce^{\overline{a_0}z}$, and $\varphi(z)=a_0+a_{1}z$ where $c=f(0)$, $a_0=\varphi(0)$, $a_1=\varphi'(0)$; for $a_0\in\mathbb D$, $c\in \mathbb R$, $a_1\in \mathbb R$.

(1)Assume $b\neq0$.By lemma \ref{lm5.1}, $a_0\neq 0$ and 
$$W_{f,\varphi}W_{g,\psi}K_b=W_{g,\psi}W_{f,\varphi}K_b=\overline{f(b)}W_{g,\psi}K_b.$$
Since $g$ has no zero, we have $W_{g,\psi}K_b\neq 0$, so$W_{g,\psi}K_b$ is an eigenvector for $W_{f,\varphi}^*=W_{f,\varphi}$with an eigenvalue $\overline{f(b)}$.
Since $\varphi$, not an identity map or an elliptic automorphim, has a fixed
point b in D, it suffices to consider only the case when $-1+ |a_0 |\leq a_1< 1-|a_0 |$.

Since $\mathrm{ker}~(W_{f,\varphi}-f(b))^*=\mathrm{span}\{e_0\}$, where $e_0=e^{-\frac{1}{2}|b|^2}K_b$,
$$W_{g,\psi}K_b=\delta' e_0=\delta'e^{-\frac{1}{2}|b|^2}K_b:=\delta K_b.$$
So
\begin{align*}
W_{g,\psi}e_1&=W_{g,\psi}\left(e^{-\frac{1}{2}|b|^2}K_b({\frac{z-b}{\overline{b}z-1}})\right)\\
&=e^{-\frac{1}{2}|b|^2}M_g(K_b\circ\psi)\left({\frac{z-b}{\overline{b}z-1}}\circ\psi\right)\\
&=e^{-\frac{1}{2}|b|^2}(M_gC_{\psi}K_b)\left({\frac{z-b}{\overline{b}z-1}}\circ\psi\right)\\
&=\delta K_be^{-\frac{1}{2}|b|^2}\left({\frac{z-b}{\overline{b}z-1}}\circ\psi\right).
\end{align*}
By \ref{lm5.1},
$$W_{f,\varphi}W_{g,\psi}e_1=W_{g,\psi}W_{f,\varphi}e_1=\overline{f(b)}\alpha W_{g,\psi}e_1,$$
that $W_{g,\psi}e_1$ is an eigenvector of $W_{f,\varphi}$ corresponding to eigenvalue $\overline{f(b)}\alpha$. Noticing that
$$\mathrm{ker}(W_{f,\varphi}-\overline{f(b)}\alpha)=\mathrm{span}\{e_1\},$$
so exist $c_1\in \mathbb C$ such that
\begin{align*}
  \delta K_be^{-\frac{1}{2}|b|^2}\left ( {\frac{z-b}{\overline{b}z-1}}\circ\psi \right ) =c_1e_1=c_1K_be^{-\frac{1}{2}|b|^2}\frac{z-b}{\overline{b}z-1},
\end{align*}
denoting $\eta=\frac{c_1}{\delta}$, we have
\begin{align*}
  \frac{\psi(z)-b}{\overline{b}\psi(z)-1}=\eta\frac{z-b}{\overline{b}z-1}.
\end{align*}
Thus
\begin{align}\label{psiz}
  \psi(z)=\frac{(|b|^2-\eta)z+(\eta-1)b}{\overline{b}(1-\eta)z+(|b|^2\eta-1)}.
\end{align}
Setting $d_0=\psi(0)=\frac{(\eta-1)b}{{|b|}^{2}\eta-1}$, $d_1=\frac{(\eta-1)\overline{b}}{{|b|}^{2}\eta-1}$,
Now we want to find $d_2$, so that $\psi$ can be represented as 
$$\psi(z)=d_0+\frac{d_2z}{1-d_1z}.$$
After calculating we get $d_2=\eta\frac{({|b|}^{2}-1)^2}{({|b|}^{2}\eta-1)^2}$.
Finally, we calculate the symbol $g$.
Since $W_{g,\psi}K_b=\delta K_b$, we get 
$$g(z)e^{\overline{b}\psi(z)}=\delta e^{\overline{b}z}.$$
Taking $z=b$, then $\delta=g(b)$, so
\begin{align*}
g(z)=&g(b)~e^{\overline{b}\left ( z-\psi(z) \right ) }\\
=&g(b)~e^{\overline{b}z}~e^{\overline{b} \frac{d_0-d_0^2z+d_1z}{1-d_1z}} \\
=&g(b)~e^{\overline{b}z}~e^{\overline{b} \frac{-d_0+d_3z}{1-d_1z}},
\end{align*}
where $d_3=\frac{|b|^{2}-\eta}{{|b|}^{2}\eta-1}$.

(2)Assume $b=0$. Since $W_{g,\psi}K_0=g(0)K_0$,
$$W_{g,\psi}K_0(z)\cdot z=M_g(K_0\circ\psi)(z\circ\psi)=W_{g,\psi}K_0(z)\cdot\psi(z)=g(0)K_0(z)\psi(z).$$
Thus
$$g(z)\cdot z=g(0)\psi(z).$$
According to
$$g(0)K_0(z)=W_{g,\psi}K_0(z)=g(z)K_0(z),$$
we have $g(z)=g(0)$, therefore$\psi(z)=z$.

Conversely, assume that $W_{g,\psi}$ has the form given by theorem, taking $\varphi(z)=\frac{1}{2}+\frac{1}{4}z,\ f(z)=ce^{\frac{1}{2}z}$, let $\varphi(b)=b$, then $b=\frac{2}{3}$,
then
\begin{align*}
\psi(z)=\frac{(\frac{4}{9}-\eta)z+(\eta-1)\frac{2}{3}}{\frac{2}{3}(1-\eta)z+\frac{4}{9}\eta-1},
\end{align*}
So we have
\begin{align*}
  \varphi(\psi(z))&=\frac{1}{2}+\frac{1}{4}\psi(z)\\
&=\frac{1}{2}+\frac{(\frac{1}{9}-\frac{1}{4}\eta)z+\frac{1}{6}(\eta-1)}{\frac{1}{6}(1-\eta)z+\frac{1}{9}\eta-\frac{1}{4}}\\
&=\frac{(\frac{7}{36}-\frac{1}{3}\eta)z+\frac{2}{9}\eta-\frac{7}{24}}{\frac{1}{6}(1-\eta)z+\frac{1}{9}\eta-\frac{1}{4}}
\end{align*}
and
\begin{align*}
  \psi(\varphi(z))&=\frac{(\frac{4}{9}-\eta)\varphi(z)+(\eta-1)\frac{2}{3}}{\frac{2}{3}(1-\eta)\varphi(z)+\frac{4}{9}\eta-1}\\
&=\frac{(\frac{4}{9}-\eta)(\frac{1}{2}+\frac{1}{4}z)+(\eta-1)\frac{2}{3}}{\frac{2}{3}(1-\eta)(\frac{1}{2}+\frac{1}{4}z)+\frac{4}{9}\eta-1}\\
&=\frac{(\frac{1}{9}-\frac{1}{4}\eta)z+\frac{1}{6}\eta-\frac{4}{9}}{\frac{1}{6}(1-\eta)z+\frac{1}{9}\eta-\frac{2}{3}},
\end{align*}
Thus $\varphi(\psi(z))\neq\psi(\varphi(z))$,
\end{proof}

If we assume that $C_{\psi}$ is bounded on Fock space $F^2$ in theorem \ref{thm5.2}, then letting $\overline{b}(1-\eta)=0$ in (\ref{psiz}),since $b\neq 0$, $\eta=1$, we get $d_0=d_1=0$, $d_2=1$, thus $\psi(z)=z$.
By
\begin{align*}
W_{f,\varphi}W_{g,\psi}K_b=\overline{f(b)}W_{g,\psi}K_b
\end{align*}
and $W_{f,\varphi}e_0=\overline{f(b)} e_0$, we have
\begin{align*}
W_{g,\psi}K_b=e_0=e^{\bar{b}z-\frac{1}{2}|b|^2},
\end{align*}
so we get $g(z)=e^{-\frac{1}{2}|b|^2}$.

\begin{thm}\label{thm5.3}
Let $g\in H^\infty$ which has no zero on $\mathbb D$, and let ~$\psi$ be an analytic map of $\mathbb{D}$ into itself.Assume that $W_ {f,\varphi}$ is self-adjoint and $\varphi$, not an elliptic automorphism, has a fixed point $b$ $\mathbb{D}$. Then $W_{g,\psi}\in \{W_{f,\varphi}\}^{'}$ and $C_{\psi}$ is bounded on Fock space $F^2$ if and only if $W_{g,\psi}$ has the following symbol functions; 
$$ \psi(z)=z,\ \ g(z)=e^{-\frac{1}{2}|b|^2}.$$
\end{thm}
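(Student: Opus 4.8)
The statement is an equivalence, so I would prove the two implications separately and dispose of the backward direction first, since it is immediate. If $\psi(z)=z$ and $g\equiv e^{-\frac{1}{2}|b|^2}$, then $C_\psi$ is the identity operator and is trivially bounded on $F^2$, while $W_{g,\psi}=M_{e^{-|b|^2/2}}=e^{-\frac{1}{2}|b|^2}I$ is a scalar multiple of the identity. A scalar operator commutes with every bounded operator, so in particular $W_{g,\psi}\in\{W_{f,\varphi}\}'$. This settles the ``if'' half, and all the real content lies in the ``only if'' half.

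For the ``only if'' half I would build directly on the analysis already carried out in Theorem \ref{thm5.2}. By Theorem \ref{thm5.1} we may write $f(z)=ce^{\overline{a}_0z}$ and $\varphi(z)=a_0+a_1z$ with $c,a_1\in\mathbb R$ and $a_0\in\mathbb D$. Assume first that $b\neq0$. The commutation hypothesis $W_{g,\psi}\in\{W_{f,\varphi}\}'$ together with the eigenvector argument of Theorem \ref{thm5.2} forces $\psi$ into the linear fractional form (\ref{psiz}), namely $$\psi(z)=\frac{(|b|^2-\eta)z+(\eta-1)b}{\overline{b}(1-\eta)z+(|b|^2\eta-1)}$$ for some $\eta\in\mathbb C$. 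Now I bring in the extra hypothesis: since $C_\psi$ is bounded on $F^2$, the boundedness criterion for composition operators stated in the Introduction forces $\psi$ to be affine, $\psi(z)=Az+B$ with $|A|\le1$. A nondegenerate linear fractional transformation is a polynomial precisely when the coefficient of $z$ in its denominator vanishes, i.e. $\overline{b}(1-\eta)=0$; as $b\neq0$ this yields $\eta=1$, whence $d_0=d_1=0$ and $d_2=1$, so that $\psi(z)=z$. This passage—collapsing the one-parameter family of commuting symbols down to the identity by means of the boundedness of $C_\psi$—is the crux of the argument.

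It remains to identify $g$. With $\psi(z)=z$ the operator $W_{g,\psi}$ is the multiplication operator $M_g$, so $W_{g,\psi}K_b=g\cdot K_b$. The commutation relation $W_{f,\varphi}W_{g,\psi}K_b=\overline{f(b)}\,W_{g,\psi}K_b$ together with Lemma \ref{lm5.3} shows $W_{g,\psi}K_b\in\mathrm{ker}(W_{f,\varphi}-\overline{f(b)})=\mathrm{span}\{e_0\}=\mathrm{span}\{K_b\}$; since $K_b$ vanishes nowhere, $g\cdot K_b$ being a scalar multiple of $K_b$ forces $g$ to be constant. Reading that constant off the normalized eigenvector $e_0=e^{-\frac{1}{2}|b|^2}K_b$, i.e. taking $W_{g,\psi}K_b=e_0$, gives $g\equiv e^{-\frac{1}{2}|b|^2}$. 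When $b=0$ one has $C_\psi=C_z=I$, automatically bounded, and Theorem \ref{thm5.2} already forces $\psi(z)=z$ with $g$ constant; the same normalization, now with $e_0=K_0\equiv1$, fixes the constant at $e^{0}=e^{-\frac{1}{2}|b|^2}$. Both cases therefore yield the asserted symbols.

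The step I expect to be genuinely decisive is the one just highlighted, turning (\ref{psiz}) into $\psi(z)=z$ via boundedness of $C_\psi$. The point that would need the most care, however, is the selection of the precise value $e^{-\frac{1}{2}|b|^2}$ for $g$: commutation with $\psi(z)=z$ only forces $g$ to be \emph{some} nonzero constant, because any scalar operator commutes with $W_{f,\varphi}$, and the specific value is pinned down solely through the normalization $W_{g,\psi}K_b=e_0$ inherited from Lemma \ref{lm5.3}. I would therefore state that normalization explicitly rather than leave it implicit, so that the constant is unambiguously determined.
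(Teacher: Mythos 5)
Your route is essentially the paper's own: the paper also disposes of the backward implication by direct verification (its proof environment contains \emph{only} that direction, checking $\varphi\circ\psi=\psi\circ\varphi$, $g\cdot(f\circ\psi)=f\cdot(g\circ\varphi)$, and an integral estimate for $C_\psi$), while the forward implication is carried out in the unnumbered remark just before the theorem, exactly as you do it — invoke the form (\ref{psiz}) from Theorem \ref{thm5.2}, use the fact that bounded composition operators on $F^2$ have affine symbols to force $\overline{b}(1-\eta)=0$, hence $\eta=1$ and $\psi(z)=z$, then read off $g$ from $W_{g,\psi}K_b$. Your treatment of the backward direction (a scalar multiple of the identity commutes with everything) is in fact cleaner than the paper's.

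However, the issue you flag about the constant is not a presentational matter that an ``explicit normalization'' can repair: it is a genuine falsity of the ``only if'' direction, in your argument and in the paper's alike. The commutant $\{W_{f,\varphi}\}'$ is closed under scalar multiples and the boundedness of $C_\psi$ does not involve $g$, so if $(\psi,g)=(z,e^{-\frac{1}{2}|b|^2})$ satisfies the left-hand side then so does $(z,\lambda e^{-\frac{1}{2}|b|^2})$ for every $\lambda\neq 0$; the hypotheses can therefore force $g$ to be a nonzero constant but never the particular value $e^{-\frac{1}{2}|b|^2}$. The paper conceals this by asserting $W_{g,\psi}K_b=e_0$ when the eigenvector argument only yields $W_{g,\psi}K_b\in\mathrm{span}\{e_0\}$ — the same unjustified normalization you propose to add as a hypothesis, which would change the statement rather than prove it. There is also a second gap hiding in your word ``nondegenerate'': the form (\ref{psiz}) degenerates precisely at $\eta=0$, where $\psi\equiv b$ is constant. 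A constant symbol is affine ($\psi(z)=0\cdot z+b$), so boundedness of $C_\psi$ does \emph{not} rule it out, and this case genuinely occurs under the theorem's hypotheses: with $\psi\equiv b$ and $g=K_b$ (zero-free and bounded on $\mathbb D$), one has $W_{g,\psi}h=\langle h,K_b\rangle K_b$, a rank-one operator which commutes with the self-adjoint $W_{f,\varphi}$ because $K_b$ is an eigenvector of $W_{f,\varphi}$ with real eigenvalue $f(b)$ (Lemma \ref{W*}), and $C_\psi$ is bounded since $\|C_\psi h\|=|h(b)|\,\|1\|\leq\|K_b\|\|h\|$. So even the conclusion $\psi(z)=z$ cannot be derived as stated. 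The theorem needs to be reformulated (e.g.\ conclusion ``$\psi(z)=z$ and $g$ is a nonzero constant,'' with constant symbols excluded or the $\eta=0$ family listed as a second alternative); neither your proof nor the paper's establishes it as written, and your own reservations correctly located the first of these two defects.
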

\begin{proof}
   We only need to show that $g(f\circ\psi)=f(g\circ\varphi)$ and $\varphi\circ\psi=\psi\circ\varphi$ hold and $C_{\psi}$ is bounded on $F^2$.
  First
$$\varphi(\psi(z))=\varphi(z),\psi(\varphi(z))=\varphi(z),$$
so $\varphi\circ\psi=\psi\circ\varphi$ holds.
Then 
$$g(z)\cdot f(\psi(z))=g(z)\cdot f(z),f(z)\cdot g(\varphi(z))=f(z)\cdot g(z)$$
so $g(f\circ\psi)=f(g\circ\varphi)$ also holds,
Therefore $W_{f,\varphi}W_{g,\psi}=W_{g,\psi}W_{f,\varphi}$,
that is $W_{g,\psi}\in \left\{W_{f,\varphi}\right\}^{'}$.
Finally to show $C_{\psi}$ is bounded on $F^2$.For $h\in F^2$
\begin{align*}
  \big\|C_{\psi}h\big\|_2^2 &= \frac{1}{\pi}\int_{\mathbb D}\big|C_{\psi}h(z)\big|^2e^{-|z|^2}dA(z) \\
  &=\frac{1}{\pi}\int_{\mathbb D}\big|h(\psi(z))\big|^2e^{-|\psi(z)|^2}e^{|\psi(z)|^2-|z|^2}dA(z)\\
  &\leq \sup_{z\in\mathbb D}\left\{e^{|\psi(z)|^2-|z|^2}\right\} \cdot \frac{1}{\pi}\int_{\mathbb D}\big|h(\psi(z))\big|^2e^{-|\psi(z)|^2}dA(z),
\end{align*}
Replacing $w$ by $\psi(z)$, then
\begin{align*}
  \big\|C_{\psi}h\big\|_2^2 &\leq \sup_{z\in\mathbb D}\left\{e^{|\psi(z)|^2-|z|^2}\right\} \cdot \frac{1}{\pi}\int_{\mathbb D}\big|h(w)\big|^2e^{-|w|^2}dA(w) \\
  &=\frac{C}{\pi}\int_{\mathbb D}\big|h(w)\big|^2e^{-|w|^2}dA(w)\\
  &= C\big\|h\big\|_2^2<\infty.
\end{align*}
Then we can get $C_{\psi}$ is bounded on $F^2$.
\end{proof}

Next we shoe that weighted composition operator $W_{g,\psi}$ in Theorem \ref{thm5.3} is normal.
\begin{prop}\label{prop5.1}
    Let $\varphi(z)=az+b$ be an analytic map of $\mathbb D$, then $C_{\varphi}^*=T_{K_b}C_{\overline{a}z }$. 
\end{prop}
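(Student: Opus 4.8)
The plan is to prove the operator identity by evaluating both sides on the reproducing kernels $K_w$, $w\in\mathbb{C}$, whose linear span is dense in $F^2$. The left-hand side is obtained immediately from the adjoint formula already established: since the plain composition operator $C_\varphi$ is nothing but the weighted composition operator $W_{1,\varphi}$ with constant weight $1$, Lemma~\ref{W*} applied with $f\equiv 1$ gives
$$C_\varphi^* K_w = K_{\varphi(w)} = K_{aw+b}$$
for every $w\in\mathbb{C}$.

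For the right-hand side I would compute directly, applying the two factors in turn. Recalling that $K_w(z)=e^{\overline{w}z}$, composition with the map $z\mapsto\overline{a}z$ gives
$$\big(C_{\overline{a}z}K_w\big)(z) = K_w(\overline{a}z) = e^{\overline{w}\,\overline{a}\,z} = e^{\overline{aw}\,z} = K_{aw}(z),$$
and then multiplication by $K_b(z)=e^{\overline{b}z}$ produces
$$\big(T_{K_b}C_{\overline{a}z}K_w\big)(z) = e^{\overline{b}z}\,e^{\overline{aw}z} = e^{\overline{aw+b}\,z} = K_{aw+b}(z).$$
Hence $T_{K_b}C_{\overline{a}z}K_w = K_{\varphi(w)}$, which coincides with $C_\varphi^* K_w$ for every $w$.

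To conclude I would invoke the density of $\{K_w:w\in\mathbb{C}\}$ in $F^2$: by the reproducing property, any $f\in F^2$ orthogonal to all $K_w$ satisfies $f(w)=\langle f,K_w\rangle=0$ for all $w$, so $f=0$ and the linear span of the kernels is dense. Two bounded operators that agree on a dense set must coincide, which yields $C_\varphi^* = T_{K_b}C_{\overline{a}z}$.

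The computation itself is routine; the only points needing care are the conjugate bookkeeping (so that $\overline{w}\,\overline{a}=\overline{aw}$ and the two exponents add to $\overline{aw+b}$) and the boundedness hypotheses. For the adjoint $C_\varphi^*$ to be defined, and for the ``agreement on a dense set implies equality'' argument to apply, one should note that $C_\varphi$ is bounded on $F^2$ (which, by the criterion recalled in the Preliminaries, forces $|a|\le 1$) and that $T_{K_b}C_{\overline{a}z}$ is bounded as well; these are structural hypotheses implicit in the statement rather than a genuine obstacle.
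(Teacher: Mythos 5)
Your proposal is correct and follows essentially the same route as the paper: both evaluate $C_\varphi^*$ and $T_{K_b}C_{\overline{a}z}$ on the reproducing kernels $K_w$, use the adjoint formula $C_\varphi^*K_w = K_{\varphi(w)}$, verify the same exponential computation, and conclude by agreement on the kernels. Your version is in fact slightly more careful than the paper's, since you explicitly justify the density of the span of kernels and flag the boundedness hypotheses, whereas the paper passes from the kernel identity to ``for every $h\in F^2$'' without comment.
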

\begin{proof}
  For $\beta\in \mathbb D$, by $C_{\varphi}^*K_{\beta}=K_{\varphi(\beta)}$,
  \begin{align*}
  T_{K_b}C_{\overline{a}z }K_{\beta}(z) &= K_b(z)\cdot K_{\beta}(\overline{a}z) \\
  &=e^{\overline{b}z+\overline{a\beta}z}\\
  &=K_{\varphi(\beta)}(z).
\end{align*}
Thus for every $h\in F^2$, we have $C_{\varphi}^*h=T_{K_b}C_{\overline{a}z }h$, therefore $C_{\varphi}^*=T_{K_b}C_{\overline{a}z }$.
\end{proof}

\begin{prop}\label{prop5.2}
    Let $\varphi(z)=az+b$ be an analytic map of $\mathbb D$, then $W_{\psi,\varphi}$ is normal if and only if $\overline{a}=1$ or $b=0$.
\end{prop}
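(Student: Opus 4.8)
The plan is to decide normality by testing the operator identity $W_{\psi,\varphi}W_{\psi,\varphi}^*=W_{\psi,\varphi}^*W_{\psi,\varphi}$ on the total family of reproducing kernels. Since the linear span of $\{K_z:z\in\mathbb C\}$ is dense in $F^2$, writing $W:=W_{\psi,\varphi}$, the operator $W$ is normal if and only if $\langle WW^*K_z,K_w\rangle=\langle W^*WK_z,K_w\rangle$ for all $z,w\in\mathbb C$. The two ingredients I would use are Lemma \ref{W*}, which gives $W^*K_z=\overline{\psi(z)}\,K_{\varphi(z)}$, and the direct action $WK_z(u)=\psi(u)\,K_z(\varphi(u))=\psi(u)\,e^{\overline z(au+b)}$ coming straight from the definition of the weighted composition operator.

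For the ``easy'' side I would apply $W$ to the kernel produced by $W^*$: from $W^*K_z=\overline{\psi(z)}K_{\varphi(z)}$ one gets $WW^*K_z=\overline{\psi(z)}\,WK_{\varphi(z)}$, and pairing against $K_w$ yields the closed form $\langle WW^*K_z,K_w\rangle=\overline{\psi(z)}\,\psi(w)\,e^{\overline{\varphi(z)}\varphi(w)}$. For the other side I would use $\langle W^*WK_z,K_w\rangle=\langle WK_z,WK_w\rangle$ and evaluate the resulting Gaussian integral $\int_{\mathbb C}|\psi(u)|^2 e^{\overline z\,\varphi(u)+w\,\overline{\varphi(u)}}\,d\lambda_\alpha(u)$ by the standard identity $\int e^{\lambda u+\mu\overline u}\,d\lambda_\alpha=e^{\lambda\mu/\alpha}$ (the same formula that reproduces $\langle K_z,K_w\rangle$); alternatively one can assemble this side from $W=M_\psi C_\varphi$ together with the formula $C_\varphi^*=T_{K_b}C_{\overline a z}$ of Proposition \ref{prop5.1}. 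After substituting $\varphi(u)=au+b$, both sides become exponentials of expressions that are affine in $z,\overline z,w$ plus a bilinear term $\overline z w$. Comparing the reproducing-kernel indices that appear on the two sides (equivalently, matching the coefficient of $\overline z$, and by conjugation of $w$) reduces the normality requirement to the relation $\overline a\,b=b$, that is $b(\overline a-1)=0$, which forces $\overline a=1$ or $b=0$ and gives the forward implication.

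For the converse I would substitute each case back into the two closed expressions: when $b=0$ the symbol $\varphi(z)=az$ fixes the origin and the kernel indices together with the prefactors collapse to the same quantity, while when $\overline a=1$ the pure-shift structure $\varphi(z)=z+b$ makes the two pairings coincide, so in either case $WW^*=W^*W$ on kernels and hence on all of $F^2$. The step I expect to be the main obstacle is the careful bookkeeping of the side $\langle WK_z,WK_w\rangle$: because $WK_z$ is not itself a reproducing kernel, the weight $\psi$ must be carried through the Gaussian integral, and one must match not only the kernel indices but also the scalar and linear-exponential prefactors. It is precisely in reconciling these prefactors that the genuine condition $b(\overline a-1)=0$ emerges rather than a weaker index-only constraint, so the delicate point is to keep track of the constant and first-order exponential factors throughout the comparison.
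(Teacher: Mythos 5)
Your overall strategy---testing $WW^*=W^*W$ against the reproducing kernels---is legitimate in principle, and your closed form $\langle WW^*K_z,K_w\rangle=\overline{\psi(z)}\,\psi(w)\,e^{\overline{\varphi(z)}\varphi(w)}$ is correct. The genuine gap is in the other half, and you have in fact flagged it yourself without resolving it: the pairing $\langle W^*WK_z,K_w\rangle=\langle WK_z,WK_w\rangle=\int_{\mathbb C}|\psi(u)|^2e^{\overline z\,\varphi(u)+w\,\overline{\varphi(u)}}\,d\lambda_\alpha(u)$ carries the factor $|\psi(u)|^2$, and the Gaussian identity $\int e^{\lambda u+\mu\overline u}\,d\lambda_\alpha=e^{\lambda\mu/\alpha}$ applies only to integrands that are pure exponentials of expressions affine in $u$ and $\overline u$. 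Since the proposition places no structural hypothesis on the weight $\psi$, this integral has no closed form; your claim that ``both sides become exponentials of expressions that are affine in $z,\overline z,w$ plus a bilinear term'' is therefore unjustified, and the promised reduction to $b(\overline a-1)=0$ is never actually derived---carried out honestly, the comparison produces a condition involving $\psi$ as well, not a condition on $a$ and $b$ alone. The same unevaluated integral blocks your converse direction: when $b=0$ or $\overline a=1$ you assert the two pairings coincide, but checking this requires exactly the evaluation you could not perform. Labelling the hard step ``the main obstacle'' and ``delicate bookkeeping'' defers it rather than completes it, so the argument is incomplete in both directions.

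The paper avoids this wall entirely by staying at the operator level: it invokes Proposition \ref{prop5.1}, $C_\varphi^*=T_{K_b}C_{\overline a z}$, and computes $W_{\psi,\varphi}^*W_{\psi,\varphi}f=|\psi|^2K_b\,f(|a|^2z+b)$ versus $W_{\psi,\varphi}W_{\psi,\varphi}^*f=|\psi|^2K_b\,f(|a|^2z+\overline a b)$, so that the weight-dependent prefactor $|\psi|^2K_b$ appears (formally) identically on both sides and drops out of the comparison; what remains is the pair of composition symbols $\varphi(\overline a z)=|a|^2z+b$ and $\overline a\,\varphi(z)=|a|^2z+\overline a b$, whose equality forces $b=\overline a b$, i.e.\ $\overline a=1$ or $b=0$, with no integral ever evaluated. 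You mention this factorization route in passing as an ``alternative'' but do not pursue it. To salvage your kernel-based plan you would need either to impose a form on $\psi$ making the Gaussian integral computable, or to restructure the argument so that the $|\psi|^2$ contribution cancels identically before any integration---which is, in essence, what the paper's operator computation accomplishes.
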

\begin{proof}
    By
\begin{align*}
(W_{\psi,\varphi}^*W_{\psi,\varphi})f(z)&=C_{\varphi}^*M_{\psi}^*M_{\psi}C_{\varphi}f(z)\\
&=T_{K_b}C_{\overline{a}z }M_{\psi}^*M_{\psi}C_{\varphi}f(z)\\
&=\left | \psi \right | ^2K_bf\circ\varphi\circ \overline{a}z  \\
&=\left | \psi \right | ^2K_bf(|a|^2z+b)
\end{align*}
and
\begin{align*}
(W_{\psi,\varphi}W_{\psi,\varphi}^*)f(z)&=M_{\psi}C_{\varphi}C_{\varphi}^*M_{\psi}^*f(z)\\
&=M_{\psi}C_{\varphi}T_{K_b}C_{\overline{a}z }M_{\psi}^*f(z)\\
&=\left | \psi \right | ^2K_bf\circ\overline{a}z\circ\varphi  \\
&=\left | \psi \right | ^2K_bf(|a|^2z+\overline{a}b),
\end{align*}
we know that $W_{\psi,\varphi}$ is normal on $F^2$ if and only if
$$|a|^2z+b=|a|^2z+\overline{a}b.$$
So $\overline{a}=1$ or $b=0$.
\end{proof}

By proposition \ref{prop5.2}, we can show the weighted composition operator $W_{g,\psi}$ in theorem \ref{thm5.3} is normal .
\begin{prop}\label{prop5.3}
Let $g\in H^\infty$ which has no zero on $\mathbb D$, and let ~$\psi$ be an analytic map of $\mathbb{D}$ into itself. Assume that $W_ {f,\varphi}$ is self-adjoint and $W_{g,\psi}\in \left\{W_{f,\varphi}\right\}'$, and $\varphi$, not an elliptic automorphism, has a fixed point $b$ $\mathbb{D}$ and $C_{\psi}$ is bounded on $F^2$. Then $W_{g,\psi}$ is normal on $F^2$.
\end{prop}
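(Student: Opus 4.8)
The plan is to observe that the hypotheses of Proposition \ref{prop5.3} are \emph{exactly} those of Theorem \ref{thm5.3}: $W_{f,\varphi}$ is self-adjoint, $W_{g,\psi}\in\{W_{f,\varphi}\}'$, the map $\varphi$ is not an elliptic automorphism and has a fixed point $b\in\mathbb D$, and $C_\psi$ is bounded on $F^2$. Consequently Theorem \ref{thm5.3} applies verbatim and forces the symbols of $W_{g,\psi}$ into the explicit form
$$\psi(z)=z,\qquad g(z)=e^{-\frac{1}{2}|b|^2}.$$
In other words, the commutation condition together with the boundedness of $C_\psi$ leaves no freedom in $\psi$ or $g$: the composition symbol is the identity map and the weight is a nonzero constant. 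This identification is the only substantive content of the argument; everything afterward is bookkeeping.

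Once $\psi$ and $g$ are pinned down, I would invoke Proposition \ref{prop5.2}. There the composition symbol is assumed affine, $\varphi(z)=az+b$, and normality of $W_{\psi,\varphi}$ is characterized by the condition $\overline a=1$ or $b=0$. In our situation the composition symbol $\psi(z)=z$ is affine with $a=1$ and $b=0$, so both clauses of the criterion hold; hence Proposition \ref{prop5.2} gives directly that $W_{g,\psi}$ is normal on $F^2$.

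I do not expect a genuine obstacle, since Proposition \ref{prop5.3} is essentially a corollary of Theorem \ref{thm5.3} and Proposition \ref{prop5.2}. The one point requiring care is matching the roles of weight and composition symbol when applying Proposition \ref{prop5.2}: that proposition is stated for $W_{\psi,\varphi}$ with $\varphi$ the affine map, whereas our operator is written $W_{g,\psi}$ with $\psi$ the affine map and $g$ the weight, so I would align the notation before quoting the criterion. As an independent sanity check, note that with $\psi$ the identity and $g$ a constant the operator $W_{g,\psi}$ is simply the scalar multiple $e^{-|b|^2/2}\,I$ of the identity, which is trivially normal; this both confirms the conclusion and validates the explicit symbols produced by Theorem \ref{thm5.3}.
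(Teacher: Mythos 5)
Your proposal is correct and matches the paper's own proof essentially verbatim: both invoke Theorem \ref{thm5.3} to force $\psi(z)=z$ and $g(z)=e^{-\frac{1}{2}|b|^2}$, then apply the normality criterion of Proposition \ref{prop5.2} with $a=1$, $b=0$. Your added observation that $W_{g,\psi}$ is then just the scalar operator $e^{-\frac{1}{2}|b|^2}I$, hence trivially normal, is a nice sanity check but does not change the argument.
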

\begin{proof}
    By theorem \ref{thm5.3}, $W_{g,\psi}$has the following symbol functions:
$$ \psi(z)=z,\ \ g(z)=e^{-\frac{1}{2}|b|^2}.$$
By proposition \ref{prop5.2}, $\psi(z)$ satisfies the condition, so $W_{g,\psi}$ is normal.
\end{proof}





\phantomsection

\bibliographystyle{unsrt}
\bibliography{sample.bib}


\end{document}